\newcommand*{\C}{\ensuremath{\mathbb C}\xspace}
\newcommand*{\Ii}{\ensuremath{\mathcal I}\xspace}
\newcommand*{\m}{\ensuremath{\mathfrak m}\xspace}
\newcommand*{\Oo}{\ensuremath{\mathcal O}\xspace}
\newcommand*{\PP}{\ensuremath{\mathbb P}\xspace}
\newcommand*{\Q}{\ensuremath{\mathbb Q}\xspace}
\newcommand*{\R}{\ensuremath{\mathbb R}\xspace}
\newcommand*{\Z}{\ensuremath{\mathbb Z}\xspace}
\DeclareMathOperator{\rank}{rank}
\DeclareMathOperator{\Pic}{Pic}
\DeclareMathOperator{\Num}{Num}
\DeclareMathOperator{\NE}{NE}
\DeclareMathOperator{\NEb}{\overline{NE}}
\newtheorem{proposition}{Proposition}[section]
\newtheorem{lemma}[proposition]{Lemma}
\newtheorem{corollary}[proposition]{Corollary}
\theoremstyle{remark}
\newtheorem{remark}[proposition]{Remark}
\theoremstyle{definition}
\newtheorem{definition}[proposition]{Definition}
\title{On non-projective small resolutions}
\author{Serge Lvovski}
\address{National Research University Higher School of Economics,
  Russian Federation \hfil\break
Federal Scientific Centre Science Research Institute 
of System Analysis at  Russian Academy of Science 
(FNP FSC SRISA  RAS)}
\email{lvovski@gmail.com}
\keywords{Small resolution, Picard--Lefschetz theory, extremal ray}
\subjclass{14B05, 14D05}
\thanks{Partially supported by HSE University Basic Research Program and
by Simons--IUM fellowship. The work was done according to FSI SRISA RAS research project No. 0580-2021-0007 (Reg. No 121031300051-3).}
\begin{document}

\begin{abstract}
We construct a large class of projective threefolds with one node (aka
non-degenerate quadratic singularity) such that their small
resolutions are not projective.
\end{abstract}

\maketitle

\section{Introduction}

One of the main results of this note is the following

\begin{proposition}\label{omega(1)}
Suppose that $X\subset\PP^n$ is a smooth $4$-dimensional projective
variety over~\C such that $H^0(X,\omega_X(1))\ne 0$, where $\omega_X$
is the canonical sheaf. Then a general singular hyperplane section
$Y_0\subset X$ has precisely one singular point~$a\in Y_0$, this point
is a node, and small resolutions of the point~$a$ cannot be projective
varieties.
\end{proposition}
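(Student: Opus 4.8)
The plan is to feed the hypothesis $H^0(X,\omega_X(1))\neq 0$ into a global period computation, conclude that the vanishing cycle of the node is nonzero in rational homology, and then translate this via Picard--Lefschetz theory and a surgery/Mayer--Vietoris comparison into the statement that the exceptional curve of any small resolution is homologically trivial, which precludes an ample class.

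\emph{Single node.} First I would parametrize singular hyperplane sections by the dual variety $X^*\subset(\PP^n)^*$. Biduality together with generic smoothness of the conormal variety shows that a general point of $X^*$ is a hyperplane $H_0$ tangent to $X$ at a single point $a$, with nondegenerate (rank $3$) contact precisely when $X^*$ is a hypersurface; in that case $Y_0=X\cap H_0$ acquires a single node at $a$. The hypothesis enters by forcing $X^*$ to be a hypersurface: a variety of positive dual defect is covered by the linear contact loci of its tangent hyperplanes, and restricting $\omega_X(1)=\mathcal O_X(K_X+H)$ to the lines in these $\PP^k$'s one finds negative degree, so $K_X+H$ cannot be effective. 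Thus $H^0(X,\omega_X(1))\neq 0$ excludes dual defect, and the single node follows from classical dual-variety theory.

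\emph{The vanishing cycle is nonzero.} I would choose a general pencil through $[H_0]$, producing a one-parameter smoothing $\pi\colon\mathcal{Y}\to\Delta$ with $\mathcal{Y}$ smooth, $\mathcal{Y}_0=Y_0$ and $\mathcal{Y}_t=Y_t$ smooth for $t\neq 0$; let $\delta\in H_3(Y_t,\Z)$ be the vanishing $3$-sphere. By adjunction $\omega_X(1)|_{Y_t}=\omega_{Y_t}$, so a nonzero $s\in H^0(X,\omega_X(1))$ restricts to a holomorphic $3$-form $\omega_t$ on $Y_t$, and I would compute the period $P(t)=\int_\delta\omega_t$. Since $\delta$ is monodromy invariant, $P$ is single valued and holomorphic near $0$; in the local Morse model $\ell=q(x_1,\dots,x_4)$ the Gelfand--Leray form shows $P$ vanishes to first order in $t$ with leading coefficient a nonzero multiple of the value $g(a)$ of $s$ at the node. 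As $\mathrm{div}(s)$ is a fixed divisor while $a$ sweeps out a dense subset of $X$ when $Y_0$ varies, a general $Y_0$ has $a\notin\mathrm{div}(s)$, whence $g(a)\neq 0$ and $P\not\equiv 0$. Therefore $\langle[\omega_t],\delta\rangle\neq 0$ and $\delta\neq 0$ in $H_3(Y_t,\Q)$.

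\emph{From a nonzero vanishing cycle to non-projectivity.} Let $f\colon\tilde Y_0\to Y_0$ be a small resolution with exceptional curve $C\cong\PP^1$. I would prove $[C]=0$ in $H_2(\tilde Y_0,\Q)$: then $D\cdot C=0$ for every divisor $D$ by the projection formula, so no ample class can exist and $\tilde Y_0$ is not projective. Set $V=Y_0\setminus\{a\}$, a $6$-manifold with boundary the link $L=S^2\times S^3$; then $Y_t\simeq V\cup_L M$ with Milnor fiber $M\simeq S^3$, and $\tilde Y_0\simeq V\cup_L N$ with resolution neighborhood $N\simeq S^2$. Two Mayer--Vietoris sequences identify $\delta$ with $\pm$ the image of $[S^3]\in H_3(L)$ in $H_3(V)$ and $[C]$ with $\pm$ the image of $[S^2]\in H_2(L)$ in $H_2(V)$, so $\delta\neq 0\iff[S^3]_V\neq 0$ and $[C]=0\iff[S^2]_V=0$. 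Now $H_2(L,\Q)=\Q[S^2]$ and $H_3(L,\Q)=\Q[S^3]$ pair perfectly with $[S^2]\cdot[S^3]=\pm 1$, and the ``half lives, half dies'' duality for $V$ makes $\ker\!\big(H_2(L)\to H_2(V)\big)$ and $\ker\!\big(H_3(L)\to H_3(V)\big)$ exact annihilators; since each group is one-dimensional, exactly one of $[S^2]_V,[S^3]_V$ vanishes. Hence $\delta\neq 0$ forces $[S^2]_V=0$, i.e. $[C]=0$, and the same $N\simeq S^2$ occurs for either ruling, so neither small resolution is projective.

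The main obstacle I anticipate is the topological bridge of the last paragraph: rigorously matching the two surgery descriptions of $Y_t$ and $\tilde Y_0$ over the common complement $V$, and applying the annihilator duality to decide which of $[S^2]_V,[S^3]_V$ survives. The period estimate of the second paragraph is where the hypothesis is genuinely used, and one must verify both that the Gelfand--Leray leading coefficient is nonzero and that $a\notin\mathrm{div}(s)$ generically; I expect this to be essential but routine compared with the surgery comparison.
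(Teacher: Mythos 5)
Your strategy is sound and genuinely different from the paper's in both of its technical halves. The paper obtains ``the vanishing cycle is nonzero'' (Condition (A) of SGA7) by citing a Betti-number inequality $b_3(Y)>b_3(X)$ from an earlier paper of the author together with Landman's theorem to rule out dual defect; it then passes to non-projectivity by comparing $b_2(Y_0)$ with $b_4(Y_0)$ via exact sequences of pairs, translating the answer into the rank of $H_2(Q,\Q)\to H_2(\tilde Y_0,\Q)$ for the blowup of the node, and finally running an algebro-geometric argument (blow up the exceptional $\PP^1$ of a hypothetical projective small resolution, identify the result with the blowup of the node via the universal property, and pair the two rulings against an ample class). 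Your period integral with its Gelfand--Leray leading term replaces the first citation (at the cost of invoking Ein's degree formula for lines in the contact loci of dual-defective varieties, where the paper uses Landman's topological theorem), and your Mayer--Vietoris plus ``half lives, half dies'' duality on the link $L\cong S^2\times S^3$ replaces the paper's blowup-and-Betti-number machinery with a cleaner, purely topological equivalence $\delta\ne0\iff[C]=0$ in $H_2(\tilde Y_0,\Q)$. The step you single out as the anticipated obstacle is in fact fine: $\ker(H_k(L)\to H_k(V))$ and $\ker(H_{5-k}(L)\to H_{5-k}(V))$ are exact annihilators by Poincar\'e--Lefschetz duality, and since $H_2(L,\Q)$ and $H_3(L,\Q)$ are one-dimensional and pair perfectly, exactly one of the two generators dies in $V$.

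The genuine gap is elsewhere. You write ``let $f\colon\tilde Y_0\to Y_0$ be a small resolution with exceptional curve $C\cong\PP^1$,'' but a small resolution is only required to have a fiber of dimension at most one over the node, so you must exclude projective small resolutions whose exceptional fiber is some other (possibly reducible or higher-genus) curve. The paper spends two lemmas on exactly this: $H^1(C,\Oo_C)=0$ via rationality of the node and the theorem on formal functions, and $H^2(C,\Q)\cong H^2(\PP^1,\Q)$ --- hence irreducibility of $C$ --- via the independence of $R^k\pi_*\Q$ from the choice of small resolution. Your Mayer--Vietoris argument actually survives for any irreducible $C$ without knowing $C\cong\PP^1$ (half lives, half dies applied to the neighborhood $N$ gives $H_3(N)=H_3(C)=0$, hence $H_2(L)\to H_2(N)$ injective, so $[S^2]_V=0$ still kills $[C]$), but if $C$ were reducible you would only learn that one particular linear combination of the components is homologically trivial, with no control on the signs of its coefficients, and the ``no ample class'' conclusion would not follow. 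So you need to supply the irreducibility of the exceptional fiber, e.g.\ by the paper's two lemmas or by the classification of small resolutions of the conifold; once that is in place the rest of your argument closes up.
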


\begin{corollary}\label{omega(k)}
Suppose that $X\subset\PP^n$ is an arbitrary smooth $4$-dimensional projective
variety over~\C. Then there exists a natural number $m_0$ such that,
for any $m\ge m_0$, a general singular intersection of $X$ with a
hypersurface of degree~$m$ has precisely one singular
point, this point is a node, and small resolutions of this node
cannot be projective varieties. 
\end{corollary}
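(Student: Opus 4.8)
The plan is to deduce Corollary~\ref{omega(k)} from Proposition~\ref{omega(1)} by applying the latter to a high Veronese re-embedding of $X$. Since $X\subset\PP^n$, the hyperplane bundle $\Oo_X(1)$ is very ample, hence ample; by Serre's theorem the line bundle $\omega_X\otimes\Oo_X(m)$ is therefore globally generated (indeed very ample) for all sufficiently large $m$, so in particular $H^0\bigl(X,\omega_X\otimes\Oo_X(m)\bigr)\ne0$ for $m\gg0$. I would fix $m_0$ so that this nonvanishing holds for every $m\ge m_0$, and then let $m\ge m_0$ be arbitrary.

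For such an $m$, let $v_m\colon\PP^n\hookrightarrow\PP^N$, with $N=\binom{n+m}{m}-1$, be the $m$-th Veronese embedding, and set $X'=v_m(X)\subset\PP^N$. As $v_m$ is a closed embedding, $X'$ is again a smooth $4$-dimensional projective variety over~\C, isomorphic to $X$. The hyperplane bundle of $X'$ pulls back to $\Oo_X(m)$, while the canonical sheaf is an isomorphism invariant; hence $\omega_{X'}(1)$ corresponds to $\omega_X\otimes\Oo_X(m)$, and by the choice of $m_0$ we get $H^0\bigl(X',\omega_{X'}(1)\bigr)\ne0$. Thus $X'$ satisfies the hypotheses of Proposition~\ref{omega(1)}.

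The decisive observation is that the linear forms on $\PP^N$ restrict to $v_m(\PP^n)$ precisely as the homogeneous forms of degree $m$ on $\PP^n$, and this restriction is a linear isomorphism of the corresponding parameter spaces. Consequently the hyperplane sections of $X'\subset\PP^N$ are identified, as a family, with the intersections of $X\subset\PP^n$ by degree-$m$ hypersurfaces, and this identification matches singular members with singular members. Applying Proposition~\ref{omega(1)} to $X'$ then shows that a general singular hyperplane section of $X'$ has exactly one singular point, that this point is a node, and that its small resolutions are non-projective; translating back, the same holds for a general singular degree-$m$ intersection of $X$, which is the assertion for this~$m$. Since $m\ge m_0$ was arbitrary, the corollary follows.

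The only point demanding care is the last matching of the word \emph{general} on the two sides: one must verify that the discriminant locus in the space of degree-$m$ hypersurfaces is exactly the preimage, under the above linear isomorphism, of the discriminant locus in the dual space $(\PP^N)^\vee$. This is immediate, since whether a section is singular depends only on the pair consisting of $X$ and the divisor it cuts out, and this pair is preserved by the identification; I expect no substantive difficulty here beyond this bookkeeping.
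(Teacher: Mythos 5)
Your proposal is correct and is essentially the paper's own argument: choose $m_0$ via Serre so that $H^0(X,\omega_X(m))\ne0$ for all $m\ge m_0$, and apply Proposition~\ref{omega(1)} to the $m$-th Veronese image of $X$, identifying its hyperplane sections with the degree-$m$ hypersurface sections of $X$. The extra bookkeeping you supply about matching the discriminant loci is harmless but not needed beyond what the paper leaves implicit.
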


\begin{proof}
Take an $m_0$ such that $H^0(\omega_X(m))\ne 0$ for all $m\ge m_0$ and
apply Proposition~\ref{omega(1)} to the $m$-th Veronese image of~$X$.
\end{proof}

\begin{remark}
In view of Proposition~\ref{not-A} below, this corollary follows also
from Proposition~\ref{omega(1)} and~\cite[Expos\'e~XVIII, Corollaire
  6.4]{SGA7.2}.
\end{remark}

\begin{remark}
For the case in which the fourfold $X\subset\PP^n$ is a complete
intersection, the result of Corollary~\ref{omega(k)} is contained in
Theorem~B from the paper~\cite{PRS} by Polizzi, Rapagnetta, and
Sabatino. Indeed, this theorem from~\cite{PRS} implies that if $X$ is
a complete intersection and $Y$ is the intersection of $X$ with a
hypersurface of degree~$\ge3$ and if the only singular point of $Y$ is
a node, then the projective variety~$Y$ is factorial. On the other
hand, the factoriality, and even \Q-factoriality, of a nodal threefold
implies that it has no projective small resolution
(see~\cite{Cheltsov} or Remark~\ref{rem:cheltsov} below). Thus, for
complete intersections this result from~\cite{PRS} provides an
explicit bound $m_0=3$ in Corollary~\ref{omega(k)}. Besides,
\cite[Theorem~B]{PRS} provides for the case of finitely many ordinary
singularities of higher multiplicity on~$Y$.
\end{remark}

We derive Proposition~\ref{omega(1)} from a stronger result. To
state it, recall some terminology from SGA7.

\begin{definition}\label{def:A}
Suppose that $X\subset\PP^n$ is a smooth projective variety over~\C.
We will say that \emph{Condition} (A) \emph{is satisfied for the
  variety~$X$} if either $X^*\subset(\PP^n)^*$ is not a hypersurface
or $X^*$ \emph{is} a hypersurface and the restriction homomorphism
$H^{d-1}(X,\Q)\to H^{d-1}(Y,\Q)$, where $d=\dim X$ and $Y\subset X$ is
some (equivalently, any) smooth hyperplane section, is not a
surjection.
\end{definition}
(The restriction $H^{d-1}(X,\Q)\to H^{d-1}(Y,\Q)$ is always an
injection by virtue of Lefschetz hyperplane section theorem.)

It follows from \cite[Expos\'e~XVIII, Theorem 6.3]{SGA7.2} and hard
Lefschetz theorem that in characteristic zero Definition~\ref{def:A}
is equivalent to the definition from Section~5.3.5
of~\cite[Expos\'e~XVIII]{SGA7.2}, where Condition~(A) was originally
stated.

Recall also (\emph{loc. cit.}) that Condition~(A) can be violated only
if $d=\dim X$ is even and that, for an even-dimensional
variety~~$X\subset\PP^n$ \emph{such that $X^*$ is a hypersurface},
Condition~(A) is equivalent to the non-triviality of the monodromy
group acting on $H^{d-1}(Y,\Q)$ (if $X^*$ is not a hypersurface, this
monodromy group is automatically trivial).

Taking all this into account, here is the promised stronger result.

\begin{proposition}\label{not-A}
Suppose that $X\subset\PP^n$ is a smooth $4$-dimensional projective
variety over~\C. 

If Condition~\textup{(A)} holds for~$X$ and $X^*\subset(\PP^n)^*$ is a
hypersurface, then a general singular hyperplane section $Y_0\subset
X$ has precisely one singular point~$a\in Y_0$, this point is a node,
and small resolutions of $Y_0$ cannot be projective varieties.

If Condition~\textup{(A)} does not hold for~$X$, in which case $X$ is
automatically a hypersurface, then a general singular
hyperplane section $Y_0\subset X$ has precisely one singular
point~$a\in Y_0$, this point is a node, and there exists a 
small resolution for~$Y_0$ in the category of projective varieties.
\end{proposition}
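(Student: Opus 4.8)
The plan is to pass through a one-parameter smoothing of $Y_0$ and to translate each assertion into a statement about the vanishing cycle of that degeneration. First I would dispose of the local assertion. In both cases $X^*$ is a hypersurface: this is assumed in the first case, and in the second case $X$ is a hypersurface of degree $\ge 2$ (degree $1$ gives $X=\PP^{n-1}$, which has no singular hyperplane sections), so its dual is again a hypersurface. The classical theory of dual varieties in characteristic zero then shows that a general point of $X^*$ is a hyperplane tangent to $X$ at a single point and that the corresponding section $Y_0$ has there a single ordinary double point; thus $Y_0$ has exactly one singular point, a node, in either case. I would then put $Y_0$ into a general Lefschetz pencil of hyperplane sections, i.e.\ take a general line in $(\PP^n)^*$ through $[H]$. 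After blowing up the base locus the total space is smooth, so near $Y_0$ one obtains a smoothing $Y_t\to Y_0$ with smooth total space; let $\delta\in H^{3}(Y_t,\Q)$ be the associated vanishing cycle. Since $\dim Y_0=3$ is odd, the intersection pairing on $H^{3}(Y_t,\Q)$ is antisymmetric and nondegenerate, whence $\langle\delta,\delta\rangle=0$ and the Picard--Lefschetz transvection $\xi\mapsto\xi\pm\langle\xi,\delta\rangle\delta$ is the identity if and only if $\delta=0$ in $H^{3}(Y_t,\Q)$.

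The second step is to match Condition (A) with nonvanishing of $\delta$. By the Lefschetz hyperplane theorem $H^{3}(X,\Q)\hookrightarrow H^{3}(Y,\Q)$ for a smooth section $Y$, and the vanishing cohomology is the part of $H^{3}(Y,\Q)$ not coming from $X$. As recorded in the discussion preceding the statement---via \cite[Expos\'e~XVIII, Theorem~6.3]{SGA7.2} together with the hard Lefschetz theorem---for a fourfold whose dual is a hypersurface Condition (A) is equivalent to nontriviality of the monodromy on $H^{3}(Y,\Q)$, hence to $\delta\ne 0$. When Condition (A) fails the monodromy is trivial, so $\delta=0$, and \emph{loc.\ cit.}\ forces $X$ itself to be a hypersurface.

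The heart of the argument is the third step, relating $\delta$ to projectivity of a small resolution $\pi\colon\tilde Y_0\to Y_0$ with exceptional curve $C\cong\PP^1$. I would use the criterion that $\tilde Y_0$ is projective if and only if there is a divisor class $D$ on $\tilde Y_0$ with $D\cdot C\ne 0$: given such a $D$ and any ample $A$ on $Y_0$, the class $a\,\pi^*A+D$ is ample on $\tilde Y_0$ for $a\gg 0$, while conversely an ample class on $\tilde Y_0$ necessarily meets $C$ positively. Such a $D$ exists precisely when $Y_0$ fails to be \Q-factorial at the node, i.e.\ carries a Weil divisor that is not \Q-Cartier. For a single node the defect measuring this failure is computed by the vanishing cycle: the relations among the single class $\delta$ in $H^{3}(Y_t,\Q)$ number $0$ when $\delta\ne 0$ and $1$ when $\delta=0$, so $Y_0$ is \Q-factorial if and only if $\delta\ne 0$. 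Combining the three steps: Condition (A) holds $\iff\delta\ne 0\iff Y_0$ is \Q-factorial $\iff$ (by the Remark, equivalently \cite{Cheltsov}) no small resolution is projective; and Condition (A) fails $\iff\delta=0\iff Y_0$ is not \Q-factorial $\iff$ a projective small resolution exists.

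The main obstacle is this last step, and within it the direction producing a \emph{projective} small resolution when $\delta=0$. The nonprojectivity direction is comparatively formal once \Q-factoriality is in hand. The delicate point is to convert the purely cohomological datum $\delta=0$ into an actual algebraic Weil divisor on $Y_0$ through the node whose strict transform on $\tilde Y_0$ is Cartier and meets $C$ in nonzero degree; this is exactly what the defect computation for one node must supply, and it is where the smooth total space of the degeneration and the analysis of vanishing cycles on nodal threefolds enter. As an independent check, in the second case $X$ is forced to be a quadric fourfold, and there the required divisor---a ruling of the nodal quadric threefold $Y_0$---can be written down by hand.
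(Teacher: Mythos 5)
Your overall architecture (reduce to the vanishing cycle $\delta$ of a Lefschetz degeneration, then translate $\delta\ne 0$ versus $\delta=0$ into a statement about small resolutions) matches the paper's first half: the paper likewise shows, via the exact sequence of the pair $(Y,S)$, that Condition~(A) is equivalent to the non-vanishing of the class of the vanishing sphere. But your third step, where the real content lies, routes everything through \Q-factoriality, and that is exactly where there is a genuine gap. You assert that ``$Y_0$ is \Q-factorial if and only if $\delta\ne 0$'' because ``the defect computation for one node must supply'' a Weil divisor that is not \Q-Cartier. The equivalence between the \emph{topological} defect $b_4(Y_0)-b_2(Y_0)$ and the \emph{algebraic} \Q-factoriality defect is not automatic: the extra class in $H_4(Y_0,\Q)$ when $\delta=0$ need not a priori be represented by an algebraic cycle, and conversely relating ``homologous to a Cartier class'' to ``\Q-Cartier'' requires hypotheses (typically vanishing of $H^1(\Oo)$, $H^2(\Oo)$, etc.). The paper itself is careful on this point: Remark~\ref{rem:cheltsov} says only that the condition $b_2=b_4$ is equivalent to \Q-factoriality ``at least in some important cases,'' and the proof deliberately never passes through \Q-factoriality. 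You flag this as ``the delicate point'' but do not close it, and closing it is the whole difficulty of the existence direction.

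The paper avoids the issue by working with curve classes rather than divisor classes. It reformulates everything in terms of the rank of $i_*\colon H_2(Q,\Q)\to H_2(\tilde Y_0,\Q)$ for the blow-up $\tilde Y_0$ of the node (Propositions~\ref{b_2=b_4} and~\ref{b_4(Y_0)}); the two rulings $\ell,m\subset Q$ are automatically algebraic, so no topological-to-algebraic comparison is needed. For non-existence it shows directly (Lemmas~\ref{H^1(O)=0}--\ref{blowuplemma}) that a projective small resolution would force $\rank i_*=2$; for existence it shows $\R_+[\ell]$ is an extremal ray of type (1.1.2.1) and invokes Mori's contraction theorem, which hands you a \emph{projective} $W_1$ with no need to exhibit a non-\Q-Cartier Weil divisor on $Y_0$. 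Two smaller points: your ``independent check'' that $X$ must be a quadric fourfold when Condition~(A) fails is unsupported --- the paper explicitly states that no classification of fourfolds violating Condition~(A) is known; and the claim that $a\,\pi^*A+D$ is ample for $a\gg0$ whenever $D\cdot C\ne0$ needs the sign adjusted and a relative-ampleness argument, though that part is standard.
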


It is believed (see~\cite[Expos\'e~XVIII, 5.3.5]{SGA7.2}) that 
Condition~(A) holds for ``most'' varieties of even dimension. In small
dimensions one can hope for a complete classification of the exceptions.
In dimension~$2$, varieties for which Condition~(A) is violated were 
classified by Zak~\cite{Zak}. As far as I know, no classification in
dimension~$4$ has been found yet.

The paper is organized as follows. In Section~\ref{sec:contration} we
state and prove several well-known properties of small resolutions for
which I did not manage to find references; thus, the results in this
section do not claim to much novelty. In Section~\ref{sec:top2geom} we
establish the link between Condition~(A) and properties of small
resolutions of singular hyperplane sections of a smooth fourfold.
Finally, we prove Propositions~\ref{omega(1)} and~\ref{not-A} in
Section~\ref{conclusion}.

\subsection*{Acknowledgements}

I am grateful to Ivan Cheltsov and Constantin Shramov for useful
discussions, and to Alexander Braverman for communicating to me
Lemma~\ref{IClemma} with an idea of its proof. It is a pleasure to
thank the anonymous referees for numerous helpful suggestions, which
contributed much to improvement of the text.

\section*{Notation and conventions}

Base field will be the field \C of complex numbers.

By \emph{node} on an algebraic threefold~$W$ we mean a point $a\in W$
such that $\widehat{\Oo_{a,W}}\cong \C[[x,y,z,t]]/(x^2+y^2+z^2+t^2)$
(a non-degenerate quadratic singularity). If $\sigma\colon\tilde W\to
W$ is the blowup of~$a$ and $Q=\sigma^{-1}(a)$, then $\tilde W$ is
non-singular along~$Q$, $Q\cong \PP^1\times\PP^1$, and the normal
sheaf $\Oo_Q(Q)$ is isomorphic to
$\Oo_Q(-1,-1)=\mathrm{pr}_1^*\Oo_{\PP^1}(-1)\otimes
\mathrm{pr}_2^*\Oo_{\PP^1}(-1)$.

If $X$ is a topological space and $Y\subset X$, then $X/Y$ stands for
the topological space obtained from $X$ by contraction of $Y$ to a point.

If $X$ is a smooth projective variety, then $N_1(X)=\Num(X)\otimes\R$,
where $\Num(X)$ is the group of $1$-cycles modulo numerical
equivalence; $\NE(X)\subset N_1(X)$ (resp.\ $\NEb(X)\subset N_1(X)$)
is the cone (resp.\ the closed cone) of effective $1$-cycles on $X$
modulo numerical equivalence. If $C\subset X$ is an effective curve,
then its class in $N_1(X)$ is denoted by~$[C]$.

The notation for the Betti numbers of a topological space~$X$ is~$b_i(X)$.

If $W$ is a $3$-dimensional algebraic variety with isolated
singularities then by a \emph{small resolution} we mean a proper
morphism of complex varieties $\pi\colon W_1\to W$ such that $W_1$ is
smooth, $\pi$ is an isomorphism over the smooth locus of~$W$, and the
fibers over singular points of~$W$ have dimension at most~$1$.

\section{Small resolutions}\label{sec:contration}

The results of this section seem to be well known, but I did not
manage to find appropriate references.

\begin{proposition}\label{b_2=b_4}
Suppose that $W$ is a projective threefold with a unique singular
point~$a$ that is a node, and let $\sigma\colon \tilde W\to W$ be the
blowup of the point~$a$. Put $Q=\sigma^{-1}(a)\subset\tilde W$ and let
$i_*\colon H_2(Q,\Q)\to H_2(\tilde W,\Q)$ be the homomorphism induced
by the embedding $Q\hookrightarrow \tilde W$.

In this setting, either $\rank i_*=1$ and $b_2(W)=b_4(W)$, or
$\rank i_*=2$ and $b_2(W)=b_4(W)-1$.
\end{proposition}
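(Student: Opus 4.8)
The plan is to compare the rational homology of $W$ and $\tilde W$ through the long exact sequence of the pair $(\tilde W, Q)$ and then to play the resulting relations against Poincaré duality on the smooth projective threefold $\tilde W$. Since $\sigma$ is proper, restricts to a homeomorphism $\tilde W \setminus Q \to W \setminus \{a\}$, and sends $Q$ to the single point $a$, it induces a continuous bijection $\tilde W / Q \to W$ of compact Hausdorff spaces, hence a homeomorphism. As $(\tilde W, Q)$ is a good pair, this yields $H_k(\tilde W, Q;\Q) \cong \tilde H_k(W;\Q)$, which equals $H_k(W;\Q)$ for $k \ge 1$. I regard this identification as the conceptual crux; everything afterwards is bookkeeping.

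Next I would write the long exact sequence of $(\tilde W, Q)$, feeding in $H_*(Q;\Q)$ for $Q \cong \PP^1 \times \PP^1$ (so $H_0 = H_4 = \Q$, $H_2 = \Q^2$, and $H_1 = H_3 = 0$). This breaks into
$$0 \to H_4(Q) \xrightarrow{j_*} H_4(\tilde W) \to H_4(W) \to 0$$
and
$$0 \to H_3(\tilde W) \to H_3(W) \to H_2(Q) \xrightarrow{i_*} H_2(\tilde W) \to H_2(W) \to 0 .$$
The map $j_*$ is injective because $H_4(Q) = \Q\cdot[Q]$ and $[Q] \ne 0$ in $H_4(\tilde W;\Q)$: the restriction of its Poincaré dual to $Q$ is $c_1(\Oo_Q(Q)) = c_1(\Oo_Q(-1,-1)) \ne 0$, so $[Q]$ cannot vanish. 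Hence $b_4(\tilde W) = b_4(W) + 1$, while the second sequence gives $b_2(W) = b_2(\tilde W) - \rank i_*$.

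Finally, Poincaré duality on the compact oriented $6$-manifold $\tilde W$ gives $b_2(\tilde W) = b_4(\tilde W)$, and combining the three relations yields $b_4(W) - b_2(W) = \rank i_* - 1$. It then remains to pin down $\rank i_* \in \{1,2\}$: the upper bound is immediate since $\dim H_2(Q) = 2$, and for the lower bound I would intersect with $Q$. For a ruling $\ell \subset Q$ one computes $Q \cdot \ell = \deg(\Oo_Q(-1,-1)|_\ell) = -1 \ne 0$ inside $\tilde W$, whence $i_*[\ell] \ne 0$ and $\rank i_* \ge 1$. Substituting the two possible values of $\rank i_*$ into $b_4(W) - b_2(W) = \rank i_* - 1$ gives exactly the two asserted cases. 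The only steps demanding genuine care are the homeomorphism $\tilde W/Q \cong W$ and the two non-vanishing claims ($[Q] \ne 0$ and $i_*[\ell] \ne 0$), both of which trace back to the nontriviality of the normal bundle $\Oo_Q(-1,-1)$.
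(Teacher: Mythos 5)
Your argument is correct and follows essentially the same route as the paper: identify $W$ with $\tilde W/Q$, read off $b_4(W)=b_4(\tilde W)-1$ and $b_2(W)=b_2(\tilde W)-\rank i_*$ from the long exact sequence of the pair $(\tilde W,Q)$, and close with Poincar\'e duality $b_2(\tilde W)=b_4(\tilde W)$. The only (harmless) difference is that you justify the non-vanishing of $[Q]$ and of $i_*[\ell]$ via the normal bundle $\Oo_Q(-1,-1)$, whereas the paper simply invokes projectivity of $Q\subset\tilde W$.
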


\begin{proof}
Since $Q\cong \PP^1\times\PP^1$ is a projective subvariety of $\tilde
W$, $\rank i_*\ge1$. Since $W$ is homeomorphic to $\tilde W/Q$, we may
consider the following fragments of the exact sequence of the pair
$(\tilde W,Q)$:
\begin{gather}
  H_4(Q,\Q)\to H_4(\tilde W,\Q)\to
  H_4(W,\Q)\to H_3(Q,\Q)\label{b_2(tildeY).1}\\
H_2(Q,\Q)\xrightarrow{i_*} H_2(\tilde W,\Q)\to H_2(W,\Q)
\to H_1(Q,\Q),\label{b_2(tildeY).2}
\end{gather}
It follows from \eqref{b_2(tildeY).1} that $b_4(W)=b_4(\tilde W)-1$,
and it follows from \eqref{b_2(tildeY).2} that $b_2(W)=b_2(\tilde
W)-\rank i_*$. Since the smoothness of $\tilde W$ implies that
$b_2(\tilde W)=b_4(\tilde W)$, the proposition follows.\qed
\end{proof}

\begin{remark}\label{rem:cheltsov}
The condition $b_2(W)=b_4(W)$ for nodal projective threefolds~$W$ was
considered by I.\,Cheltsov in~\cite{Cheltsov}. In particular, Cheltsov
observes that this condition is equivalent, at least in some important
cases, to the \Q-factoriality of~$W$ and that the \Q-factoriality of a
nodal threefold implies that it has no projective small resolutions.
\end{remark}

Again, let $W$ be a projective threefold with a unique singular
point~$a$, and suppose that $a$ is a node.

\begin{proposition}\label{res-nores}
In the above setting, let $\sigma\colon \tilde W\to W$ be the blowup
of $W$ at~$a$, and put $Q=\sigma^{-1}(a)\subset \tilde W$. Let
$i_*\colon H_2(Q,\Q)\to H_2(\tilde W,\Q)$ be the homomorphism induced
by the embedding $Q\subset\tilde W$. Then the following implications
hold.

\textup{(i)} If $\rank i_*=1$, then the
point~$a$ does not admit a small resolution in the category of projective
varieties.

\textup{(ii)} If $\rank i_*=2$, then the
point~$a$ admits a small resolution in the category of projective
varieties.
\end{proposition}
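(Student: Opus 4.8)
The plan is to exploit the standard local geometry of a three-dimensional node together with the extremal-ray machinery of Mori theory, using throughout that $\tilde W$ is smooth and projective (being the blowup of a projective threefold). First I would record the local structure near $Q$: analytically $W$ is the affine cone $xy=zt$ over $Q\cong\PP^1\times\PP^1$, and such a node has exactly two small resolutions $W^+$ and $W^-$, obtained from $\tilde W$ by contracting one or the other of the two rulings of $Q$ to a curve $C^\pm\cong\PP^1$. Writing $f_1,f_2\in H_2(Q,\Q)$ for the classes of the two rulings, I would note the two computations that drive everything: each $i_*f_j\ne0$, because an ample class $\tilde H$ on $\tilde W$ satisfies $\tilde H\cdot f_j>0$; and, since $K_{\tilde W}=\sigma^*K_W+Q$ with $Q\cdot f_j=\deg(\Oo_Q(-1,-1)|_{f_j})=-1$, one has $K_{\tilde W}\cdot f_j=-1$ for both $j$.

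For (i), I would suppose toward a contradiction that a projective small resolution $W_1\to W$ exists. Then $W_1$ is one of $W^\pm$, so there is a morphism $p\colon\tilde W\to W_1$ contracting one ruling of $Q$, say the $f_2$-ruling, to the exceptional curve $C\subset W_1$, while carrying each $f_1$-curve isomorphically onto $C$. Hence $p_*i_*f_2=0$ while $p_*i_*f_1=[C]$, and $[C]\ne0$ because $W_1$ is projective (an ample class on $W_1$ meets $C$ positively). If $\rank i_*=1$, then $i_*f_1$ and $i_*f_2$ are proportional and both nonzero, say $i_*f_2=\lambda\, i_*f_1$ with $\lambda\ne0$; applying $p_*$ gives $0=\lambda[C]$, forcing $[C]=0$, a contradiction. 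This proves (i).

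For (ii), assume $\rank i_*=2$. Since $\sigma\colon\tilde W\to W$ is a projective morphism onto a projective variety, for $A$ ample on $W$ the nef class $\sigma^*A$ cuts out the face $F=(\sigma^*A)^\perp\cap\NEb(\tilde W)=\R_{\ge0}i_*f_1+\R_{\ge0}i_*f_2$ of $\NEb(\tilde W)$ (it consists exactly of classes of curves contracted by $\sigma$, that is, of curves in $Q$). The crux is to upgrade $\rank i_*=2$, which is linear independence of $f_1,f_2$ in $H_2$, to independence in $N_1(\tilde W)$, so that $F$ is a genuine two-dimensional simplicial face whose two boundary rays $R_j=\R_{\ge0}i_*f_j$ are then extremal rays of $\NEb(\tilde W)$. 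Granting this, each $R_j$ is a $K_{\tilde W}$-negative extremal ray, so by the contraction theorem the contraction $\tilde W\to W^+$ of $R_2$ is a projective morphism onto a projective variety; it contracts precisely the $f_2$-curves, hence collapses $Q$ to a curve $C^+\cong\PP^1$, is smooth by the local conifold picture, and factors $\sigma$ as $\tilde W\to W^+\to W$ since $R_2\subset F$. The induced $W^+\to W$ is the desired projective small resolution.

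The main obstacle is the comparison between homological and numerical independence needed in (ii). I would settle it by Hodge theory: if $\alpha f_1+\beta f_2$ were numerically trivial on $\tilde W$, its Poincar\'e dual would be an algebraic class of type $(2,2)$ pairing to zero with every rational $(1,1)$-class---and these are exactly $\mathrm{NS}(\tilde W)\otimes\Q$ by the Lefschetz $(1,1)$ theorem---and pairing to zero with classes of every other Hodge type for degree reasons; Poincar\'e duality then forces $\alpha f_1+\beta f_2=0$ in $H_2(\tilde W,\Q)$. Thus numerical dependence implies homological dependence, the contrapositive of what is needed. A secondary point to pin down is the existence and smoothness of the ruling contractions $\tilde W\to W^\pm$; this is the classical local structure of the conifold recalled at the outset, and for (ii) it also follows from Mori's description of the divisorial contraction of a smooth threefold to a smooth curve.
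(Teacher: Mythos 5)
Your overall strategy is sound, and the two halves fare differently against the paper's own proof. For part (ii) you follow essentially the same route as the paper: show that each ruling class spans a $K_{\tilde W}$-negative extremal ray and invoke Mori's contraction theorem (the paper cites the type (1.1.2.1) contraction from Koll\'ar). The differences lie in how extremality is verified. The paper does it by hand, via Lemma~\ref{NE_1+NE_2} and the computation $(w_2,H)+(w_2,Q)=0$; you instead assert that $(\sigma^*A)^\perp\cap\NEb(\tilde W)$ ``consists exactly of classes of curves contracted by $\sigma$''. That is not literally true of a face of the \emph{closed} cone --- it also contains limits of effective classes, and showing those limits still lie in $\R_{\ge0}i_*f_1+\R_{\ge0}i_*f_2$ is precisely the content of Lemma~\ref{NE_1+NE_2} (equivalently, of the standard identification of the relative cone $\NEb(\tilde W/W)$ with that face); cite or reproduce that argument. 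A more genuine flaw is your Hodge-theoretic justification that numerical triviality of $\alpha f_1+\beta f_2$ forces homological triviality: $H^2(\tilde W,\C)$ is \emph{not} spanned by $\mathrm{NS}(\tilde W)\otimes\C$ together with $H^{2,0}\oplus H^{0,2}$ once transcendental $(1,1)$-classes exist, so ``orthogonal to rational $(1,1)$-classes and to the other Hodge types'' does not give ``orthogonal to everything''. The statement you need is the standard fact that homological and numerical equivalence coincide for $1$-cycles with $\Q$-coefficients; a correct proof uses hard Lefschetz (write the dual $(2,2)$-class as $L\cdot\delta_0$ with $\delta_0\in\mathrm{NS}\otimes\Q$) together with the Hodge index theorem (nondegeneracy of $(\alpha,\beta)\mapsto \int L\,\alpha\,\beta$ on $\mathrm{NS}\otimes\Q$). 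The paper quietly uses the same fact, so this is a repair rather than a change of course.

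For part (i) you take a genuinely different and much shorter route, but it hinges entirely on the recorded ``fact'' that the node has exactly two small resolutions, both dominated by $\tilde W$, so that an arbitrary projective small resolution $W_1$ receives a morphism $p\colon\tilde W\to W_1$ contracting one ruling of $Q$. Granting that, your computation ($p_*i_*f_2=0$, $p_*i_*f_1=[C]\ne0$, against $i_*f_2=\lambda i_*f_1$ with $\lambda\ne0$) is correct and is a clean repackaging of the paper's final step, where $(s^*H,\ell)=0$ and $(s^*H,m)>0$. But be aware that this uniqueness-and-domination statement is exactly what Lemmas~\ref{H^1(O)=0}, \ref{IClemma} and~\ref{blowuplemma} are built to establish: that the fiber of an arbitrary projective small resolution is a single $\PP^1$ (via rationality of the node and the independence of the fiber cohomology from the choice of resolution), and that blowing up that $\PP^1$ recovers $\tilde W$ (via the universal property of blowups). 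The fact is classical for the analytic germ (Atiyah, Reid, Friedman), but the author explicitly notes the difficulty of locating references; if you lean on it you must cite it and also supply the gluing step that promotes the local identification near $a$ to a global morphism $\tilde W\to W_1$ over $W$. As written, the heart of part (i) is outsourced to an unproved assertion whose proof is, in effect, the bulk of the paper's Section~\ref{sec:contration}.
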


We begin with some lemmas.

\begin{lemma}\label{H^1(O)=0}
Suppose that $W$ is a $3$-dimensional algebraic variety with a unique
singular point $a$ that admits a small resolution $\pi\colon W_1\to
W$, where $W_1$ is an algebraic variety; put $C=\pi^{-1}(a)$
\textup(set-theoretically\textup). Besides, suppose that the
singularity of $W$ at the point~$a$ is rational.

Then $H^1(C,\Oo_C)=0$.
\end{lemma}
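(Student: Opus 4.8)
The plan is to deduce the vanishing from Grothendieck's theorem on formal functions, using rationality to kill the relevant higher direct image and using the one-dimensionality of the fibre (which is exactly what the smallness of $\pi$ guarantees) to control the passage to the limit.

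First I would record that $\pi\colon W_1\to W$ is a resolution in the usual sense: it is proper and birational, $W_1$ is smooth, and $\pi$ is an isomorphism over $W\setminus\{a\}$. Since the property of being a rational singularity does not depend on the chosen resolution, the hypothesis that $a$ is rational yields $R^1\pi_*\Oo_{W_1}=0$; in particular the stalk $(R^1\pi_*\Oo_{W_1})_a$ vanishes, and hence so does its $\m_a$-adic completion. Next I would invoke the theorem on formal functions. Writing $C_n\subset W_1$ for the closed subscheme defined by $\m_a^n\Oo_{W_1}$ (so that $C_1$ is the scheme-theoretic fibre and every $C_n$ has the same one-dimensional underlying set $C$), the theorem identifies that completed stalk with $\varprojlim_n H^1(C_n,\Oo_{C_n})$, whence $\varprojlim_n H^1(C_n,\Oo_{C_n})=0$.

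I would then check that the transition maps of this inverse system are surjective. The kernel of $\Oo_{C_{n+1}}\to\Oo_{C_n}$ is the coherent sheaf $\m_a^n\Oo_{W_1}/\m_a^{n+1}\Oo_{W_1}$, which is annihilated by $\m_a\Oo_{W_1}$ and hence supported on the one-dimensional set $C$; consequently the obstruction to surjectivity of $H^1(C_{n+1},\Oo_{C_{n+1}})\to H^1(C_n,\Oo_{C_n})$ lies in an $H^2$ group on a scheme of dimension $\le1$ and vanishes by Grothendieck's vanishing theorem. An inverse system with surjective transition maps whose inverse limit is zero has every term zero, since each term is a quotient of the limit; thus $H^1(C_1,\Oo_{C_1})=0$. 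Finally I would descend to the reduced curve via the reduction sequence $0\to\mathcal N\to\Oo_{C_1}\to\Oo_C\to0$, where $\mathcal N$ is the nilradical: the long exact sequence exhibits $H^1(C,\Oo_C)$ as a quotient of $H^1(C_1,\Oo_{C_1})=0$, the term $H^2(C_1,\mathcal N)$ again vanishing for dimension reasons. This gives $H^1(C,\Oo_C)=0$.

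The step I expect to require the most care is the bookkeeping around the inverse limit: one must confirm that the one-dimensionality of the fibre simultaneously forces the surjectivity of the transition maps and the vanishing of the relevant $H^2$ groups, so that the formal (scheme-theoretic) vanishing obtained from the theorem on formal functions can be transported honestly to the reduced fibre $C$.
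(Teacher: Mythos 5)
Your proof is correct and follows essentially the same route as the paper's: both rest on the theorem on formal functions, the independence of rationality from the chosen resolution, and the vanishing of $H^2$ on one-dimensional schemes (which the paper phrases as right exactness of $H^1$ and which gives both the surjectivity of the transition maps and the descent to the reduced fibre). The only difference is organizational --- the paper argues by contraposition, propagating a hypothetical nonvanishing of $H^1(C,\Oo_C)$ upward to the formal completion of the stalk of $R^1\pi_*\Oo_{W_1}$, while you push the vanishing downward through the inverse system.
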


\begin{proof}
Arguing by contradiction, assume that $H^1(C,\Oo_C)\ne0$.  Denote by
$\tilde C=\pi^*a$ the scheme-theoretical fiber over~$a$, so that
$\tilde C_{\mathrm{red}}\cong C$. Since there exists a surjection
$\Oo_{\tilde C}\to \Oo_{C}$ and since the functor $H^1(\cdot)$ is
right exact on one-dimensional schemes, one has $H^1(\tilde
C,\Oo_{\tilde C})\ne0$. If $\Ii\subset\Oo_{W_1}$ is the ideal sheaf of
the closed subscheme $\tilde C\subset W_1$, then, by the same right
exactness, $H^1(\Oo_{W_1}/\Ii^{n+1})$ surjects onto
$H^1(\Oo_{W_1}/\Ii^{n})$ for any natural $n$. Hence, $\varprojlim
H^1(\Oo_{W_1}/\Ii^{n})$ surjects onto $H^1(\Oo_{\tilde C})\ne 0$. So,
$\varprojlim H^1(\Oo_{W_1}/\Ii^{n})\ne0$. By Theorem on formal
functions (see for example \cite[Theorem 11.1]{H}) this implies that
the $\m_a$-adic completion of the stalk of $R^1\pi_*\Oo_{W_1}$ at~$a$
is not zero. Hence, the stalk itself is not zero either, which
contradicts the hypothesis that the singularity of $W_1$ at~$a$ is
rational.\qed
\end{proof}

\begin{lemma}\label{IClemma}
Suppose that $W$ is a projective threefold with a unique
singular point~$a$ and that $\pi\colon W_1\to W$ is its small resolution
in the category of complex spaces.

Then the singular cohomology $H^k(\pi^{-1}(a),\Q)$ is independent of
the small resolution~$\pi$, for all~$k$.
\end{lemma}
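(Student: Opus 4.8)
The plan is to identify the derived pushforward $R\pi_*\Q_{W_1}$ with an object intrinsic to~$W$ --- namely, up to shift, the intersection cohomology complex $\mathrm{IC}_W$ with constant coefficients --- and then to recover the cohomology of the fibre $\pi^{-1}(a)$ from the stalk of this object at~$a$ by proper base change. Since $\mathrm{IC}_W$ does not refer to any resolution, the independence of $H^k(\pi^{-1}(a),\Q)$ on~$\pi$ will follow at once. This is what the label of the lemma alludes to.

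First I would check that $\pi$ is a \emph{small} map in the sense of the theory of perverse sheaves. Since $\pi$ is an isomorphism over the smooth locus of~$W$, every fibre outside~$a$ is a single point, so the only possibly positive-dimensional fibre is $\pi^{-1}(a)$, which has dimension at most~$1$. Thus the locus $\{y\in W:\dim\pi^{-1}(y)\ge1\}$ is contained in the single point~$a$, and $\dim\{a\}=0<1=\dim W-2\cdot1$, while $\{y:\dim\pi^{-1}(y)\ge2\}$ is empty; the strict codimension inequalities defining smallness therefore hold. Next I would invoke the classical fact that for a small proper morphism $\pi\colon W_1\to W$ from a smooth space $W_1$ of dimension $n=3$ that is generically an isomorphism, there is a canonical isomorphism $R\pi_*\Q_{W_1}[n]\cong\mathrm{IC}_W$ (Goresky--MacPherson; see also the survey of de~Cataldo--Migliorini). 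The support and cosupport conditions characterizing $\mathrm{IC}_W$ follow from the smallness estimate, and since these conditions are local on~$W$ and rely only on proper base change, the statement holds in the analytic category even though $W_1$ is merely a complex space; crucially, the right-hand side depends only on~$W$.

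Finally, applying proper base change to the inclusion $\{a\}\hookrightarrow W$, the stalk at~$a$ of the $k$-th cohomology sheaf of $R\pi_*\Q_{W_1}$ computes the cohomology of the fibre, $(\mathcal{H}^k R\pi_*\Q_{W_1})_a\cong H^k(\pi^{-1}(a),\Q)$. Combining this with the isomorphism of the previous step yields $H^k(\pi^{-1}(a),\Q)\cong(\mathcal{H}^{k-n}\mathrm{IC}_W)_a$, and the right-hand side is manifestly independent of the chosen small resolution. The main subtlety I anticipate is the bookkeeping with the shift conventions together with the justification of the key isomorphism $R\pi_*\Q_{W_1}[n]\cong\mathrm{IC}_W$ in the analytic rather than the algebraic setting; but as both the construction of $\mathrm{IC}_W$ and its characterization by support conditions are local on~$W$ and use only proper base change, which is available for proper maps of complex spaces, this presents no real difficulty.
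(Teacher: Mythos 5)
Your argument is correct, and it is in fact the ``less elementary (but quicker)'' proof that the paper's own proof is explicitly an adaptation of (the label \texttt{IClemma} alludes exactly to your route). You verify that $\pi$ is small in the perverse-sheaf sense, identify $R\pi_*\Q_{W_1}[3]$ with $\mathrm{IC}_W$, which is intrinsic to $W$, and read off the fibre cohomology by proper base change; this is sound, granted the standard Goresky--MacPherson characterization of $\mathrm{IC}_W$ by support and cosupport conditions, which, as you note, is local and available in the analytic category. The paper instead avoids intersection cohomology altogether: writing $U=W\setminus\{a\}\cong W_1\setminus C$ with inclusions $j\colon U\hookrightarrow W_1$ and $i\colon U\hookrightarrow W$, it uses the Leray spectral sequence $R^p\pi_*(R^qj_*\Q_U)\Rightarrow R^{p+q}i_*\Q_U$ together with the vanishing $R^qj_*\Q_U=0$ for $q=1,2$ (because $C$ has complex codimension $2$ in the smooth $W_1$) and $R^k\pi_*\Q_{W_1}=0$ for $k>2$ (fibres of dimension $\le1$) to conclude $R^k\pi_*\Q_{W_1}\cong R^ki_*\Q_U$ for $k\le2$; the right-hand side depends only on $W$, and the stalk at $a$ gives $H^k(\pi^{-1}(a),\Q)$. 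The paper's subsequent remark, that $R\pi_*\Q_{W_1}\cong\tau_{\le2}Ri_*\Q_U$, is precisely your $\mathrm{IC}_W$ up to shift. What your approach buys is brevity and a conceptually transparent reason for the independence; what the paper's buys is self-containedness, requiring only the Leray spectral sequence and elementary facts about constant sheaves on complements of high-codimension analytic subsets, with no appeal to the perverse formalism. One small point you may wish to make explicit is the irreducibility of $W$ (implicit throughout the paper), which is needed for $\mathrm{IC}_W$ with constant coefficients on the smooth locus to be the canonical intrinsic object.
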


\begin{proof}
This proof is an adaptation of a less elementary (but quicker) proof
suggested by A.\,Braverman.

In the proof we will work with sheaves and complex varieties in the
classical topology. Suppose that $\pi\colon W_1\to W$ is a small
resolution of $W$ and put $C=\pi^{-1}(a)$. The
morphism $\pi$ induces an isomorphism between $W_1\setminus C$ and
$W\setminus \{a\}$; we denote this complex manifold
by~$U$. Let $i\colon U\hookrightarrow W$, $j\colon U\hookrightarrow
W_1$ be the natural embeddings. Let $\Q_U$ (resp. $\Q_{W_1}$) be the
constant sheaf with the stalk~\Q on $U$ (resp.~$W_1$). Since
$\pi\circ j=i$, there exists a first quadrant spectral sequence
\begin{equation}\label{specseq}
E_2^{pq}=R^p\pi_*(R^qj_*\Q_U)\Rightarrow R^{p+q}i_*\Q_U  
\end{equation}
(see \cite[Theorem III.8.3e]{GelMan}, where we put $\mathcal R_X=\Z$
(the constant sheaf)).
Since $C$ is a $1$-dimensional complex subspace in the $3$-dimensional
smooth complex manifold $W_1$, one has $R^0j_*\Q_U=\Q_{W_1}$ and
$R^qj_*\Q_U=0$ for $q=1,2$, so $E_2^{pq}=0$ for $0<q<3$.
Taking these vanishings
into account, one infers from~\eqref{specseq} that
$R^k\pi_*\Q_{W_1}\cong R^ki_*\Q_U$ for $0\le k\le 2$. Besides,
$R^k\pi_*\Q_{W_1}=0$ for $k>2$ since the
fibers of $\pi$ are at most (complex) $1$-dimensional. Thus, the sheaves
$R^k\pi_*\Q_{W_1}$ are independent of the choice of~$\pi$.

It remains to observe that for any natural~$k$ the stalk of
$R^k\pi_*\Q$ at~$a$ is isomorphic to $H^k(\pi^{-1}(a),\Q)$, so this
cohomology is independent of the choice of~$\pi$ as well.\qed
\end{proof}

\begin{remark}
Actually, it is the direct image $R\pi_*\Q_{W_1}$ that is independent
of the choice of $\pi$. To wit, the latter direct image is
isomorphic (in the derived category of constructible sheaves on~$W$)
to $\tau_{\le2}Ri_*\Q_{U}$, where $\tau_{\le2}$ is the truncation functor.
\end{remark}

\begin{lemma}\label{blowuplemma}
Let $W$ be a three-dimensional projective variety, smooth except for a
point $a\in W$ that is a node, and suppose that $f\colon Z\to W$ is a
projective birational morphism such that $Z$ is smooth and $f$ induces
an isomorphism between $Z\setminus f^{-1}(a)$ and
$W\setminus\{a\}$. Put $f^{-1}(a)=F$
\textup(set-theoretically\textup).

If $F\cong\PP(\Oo_{\PP^1}\oplus\Oo_{\PP^1}(-n))$ for some $n\ge0$, then
$F\cong\PP^1\times\PP^1$ and the mapping~$f$ is isomorphic to the
blowup $\sigma\colon \tilde W\to W$ of the point $a\in W$.
\end{lemma}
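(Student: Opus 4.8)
The plan is to reduce everything to two invariants of the contracted divisor $F$ — its isomorphism type and its normal bundle — and then to invoke the rigidity of the node. Since $Z$ is smooth and $F=f^{-1}(a)\cong\mathbb{F}_n$ is two-dimensional, $F$ is an irreducible reduced Cartier divisor, and $f$ is a projective birational morphism whose only exceptional divisor is $F$ and which contracts $F$ to the single point $a$; thus $f$ is a divisorial contraction with relative Picard number one. I will first pin down the normal bundle $N_{F/Z}=\Oo_Z(F)|_F$, showing that necessarily $n=0$ and $N_{F/Z}\cong\Oo_{\PP^1\times\PP^1}(-1,-1)$, and then use this to identify $f$ with the blowup $\sigma$.

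For the normal bundle, the key point is that the node is a Gorenstein (indeed terminal) singularity, so $K_W$ is Cartier and I may write $K_Z=f^{*}K_W+\alpha F$ with $\alpha\in\Z$; terminality gives $\alpha\ge1$. Restricting to $F$ and using that $f|_F$ is constant, so that $(f^{*}K_W)|_F\cong\Oo_F$, adjunction yields $K_F=(K_Z+F)|_F=(\alpha+1)\,N_{F/Z}$. Because $F$ is contracted to a point by a projective birational morphism, one has $N_{F/Z}\cdot C<0$ for every curve $C\subset F$ (negativity of a contracted divisor). Let $\ell$ and $C_0$ denote a fiber and the negative section of $\mathbb{F}_n$. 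Intersecting the adjunction identity with $\ell$ gives $(\alpha+1)(N_{F/Z}\cdot\ell)=K_F\cdot\ell=-2$; since $\alpha+1\ge2$ and $N_{F/Z}\cdot\ell\le-1$, this forces $\alpha=1$ and $N_{F/Z}=\tfrac12 K_F$. Intersecting with $C_0$ then gives $N_{F/Z}\cdot C_0=\tfrac12(n-2)$, which must be a negative integer; together with the integrality of $\tfrac12 K_F$ this leaves only $n=0$, whence $F\cong\PP^1\times\PP^1$ and $N_{F/Z}\cong\Oo(-1,-1)$.

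It remains to identify $f$ with the blowup $\sigma\colon\tilde W\to W$, which also contracts a copy $Q\cong\PP^1\times\PP^1$ with normal bundle $\Oo(-1,-1)$ to $a$. My primary route is the universal property of blowing up: if the ideal $\m_a\Oo_Z$ is invertible, then $f$ factors as $f=\sigma\circ g$ for a morphism $g\colon Z\to\tilde W$ that is an isomorphism over $W\setminus\{a\}$. Such a $g$ cannot contract $F$, for if it did, the strict transform of $Q$ and the divisor $F$ would be two distinct components of $g^{-1}(Q)=f^{-1}(a)$, contradicting the irreducibility of $f^{-1}(a)$. Hence $g$ has no exceptional divisor, and a proper birational morphism of smooth projective threefolds with no exceptional divisor is an isomorphism, giving $f\cong\sigma$.

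The main obstacle is therefore the invertibility of $\m_a\Oo_Z$ near $F$. Pulling back the four generators $x,y,z,t$ of $\m_a$, one obtains four sections of $\Ii_F$ whose images in the conormal bundle $\Ii_F/\Ii_F^{2}\cong N_{F/Z}^{\vee}\cong\Oo_F(1,1)$ one must show to be base-point-free, with no residual vanishing along $F$; these are precisely the forms cutting out the projectivized tangent cone of the node, which is the smooth quadric $\PP^1\times\PP^1$, so one expects them to generate $\Oo_F(1,1)$, after which Nakayama gives $\m_a\Oo_Z=\Ii_F$. Verifying this base-point-freeness — equivalently, ruling out a residual base locus or higher-order vanishing of the $f^{*}x_i$ along $F$ — is the delicate step. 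A cleaner way to discharge it is to appeal to rigidity of the normal bundle directly: since $\Oo_F(1,1)$ is ample, a divisor $\PP^1\times\PP^1$ with this conormal bundle has an analytic neighborhood isomorphic to a neighborhood of the zero section in the total space of $\Oo(-1,-1)$, whose contraction to a point is unique and yields exactly the node. Applying this to both $F\subset Z$ and $Q\subset\tilde W$ shows that $f$ and $\sigma$ agree over an analytic neighborhood of $a$ and hence, being isomorphisms elsewhere, are isomorphic over $W$.
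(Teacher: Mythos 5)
Your route is genuinely different from the paper's: you first try to pin down $n$ and the normal bundle by a discrepancy/adjunction computation, and only afterwards identify $f$ with the blowup. The identity $K_F=(\alpha+1)N_{F/Z}$ with $\alpha\ge1$, and the resulting $\alpha=1$, $N_{F/Z}\cdot\ell=-1$ on the fibers, is correct and attractive. But there is a genuine gap where you invoke ``negativity of a contracted divisor'' to conclude $N_{F/Z}\cdot C<0$ for \emph{every} curve $C\subset F$, in particular for the rigid negative section $C_0$. The negativity lemma only produces \emph{some} contracted curve on which $F$ is negative (and one can get strict negativity on the moving fibers), but it says nothing about a rigid curve; and your parenthetical claim that $f$ has relative Picard number one --- which would make $-F$ $f$-ample and rescue the step --- is false in general: that relative Picard number equals the rank of the image of $N_1(F)$ in $N_1(Z)$, and the central point of the paper (Proposition~\ref{b_4(Y_0)} together with part~(ii) of Proposition~\ref{res-nores}) is that the analogous rank can equal $2$. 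Concretely, your argument does not exclude $n=2$: there $N_{F/Z}=\frac12 K_F=-C_0-2\ell$ satisfies $N_{F/Z}\cdot C_0=0$, which is consistent with everything you have actually established. The second half of your proof is also only a sketch: you explicitly leave open the invertibility of $\m_a\Oo_Z$, and the rigidity alternative, while in principle workable (formal rigidity of the quadric with normal bundle $\Oo(-1,-1)$ plus Grauert and uniqueness of the contraction), takes as input precisely the conclusion $n=0$, $N_{F/Z}\cong\Oo(-1,-1)$ that the first half has not secured.

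For comparison, the paper reverses the order and thereby avoids both difficulties: it first produces $g\colon Z\to\tilde W$ over $W$ from the universal property of blowing up, observes that $g$ must map $F$ \emph{onto} $Q$, and then rules out $n>0$ elementarily --- the pullbacks of the two rulings of $Q$ are basepoint-free classes of self-intersection $0$ on $F$, hence for $n>0$ are multiples of the fiber class, contradicting the surjectivity of $F\to Q$; finiteness of $g$ and Zariski's main theorem then finish. That argument needs no discrepancies, no negativity on rigid curves, and no analytic rigidity. If you want to keep your adjunction computation, you still need an independent argument excluding $n\ge2$ (for instance, the paper's ruling-pullback argument applied after constructing $g$), since intersection-theoretic negativity alone will not deliver it.
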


\begin{proof}
Let $\Ii_a\subset\Oo_W$ be the ideal sheaf of the point~$a$. Since $Z$
is smooth and the zero locus of the ideal $f^{-1}\Ii_a\cdot\Oo_Z$
coincides with the surface~$F\subset Z$, one has
$f^{-1}\Ii_a\cdot\Oo_Z=\Oo_Z(-mF)$ for some $m>0$, which is an
invertible ideal sheaf. Now it follows from the universality of blowup
\cite[Proposition II.7.14]{H} that there exists a morphism $g\colon
Z\to\tilde W$, where $\sigma\colon \tilde W\to W$ is the blowup of
$W$ at~$a$, such that $\sigma\circ g=f$.

Let $Q\subset \tilde W$ be the exceptional divisor of the
blowup~$\sigma$. Since $f$ induces an isomorphism between $Z\setminus
f^{-1}(a)$ and $W\setminus\{a\}$ and $\sigma$ induces an isomorphism
between $\tilde W\setminus Q$ and $W\setminus\{a\}$, one concludes
that $g$ induces an isomorphism between $Z\setminus F$ and $\tilde
W\setminus Q$. Since $g(Z\setminus F)=\tilde W\setminus Q$ and $g(Z)$
is closed in $W$, one has~$g(F)=Q$. We claim that $n=0$. 
Arguing by contradiction, suppose that $n>0$.

Identify $Q$ with the smooth quadric in $\PP^3$. Let $\ell,m\subset Q$
be generators of the quadric, from different families, and let
$g'\colon F\to Q$ be the restriction of~$g$ to~$Q$. Put $L=(g')^*\ell$
and $M=(g')^*m$ (the pullback divisors on $F$). Since the
self-intersections $(\ell,\ell)=(m, m)=0$ and the complete
linear systems $|\ell|$ and~$|m|$ on~$Q$ have no basepoints, we
conclude that $(L, L)=(M, M)=0$ and the complete linear
systems $|L|$ and~$|M|$ have no basepoints either.

Since $F\cong\PP(\Oo_{\PP^1}\oplus\Oo_{\PP^1}(-n))$, one
has $\Pic(F)=\Z s\oplus\Z f$, where $f$ is the class of a fiber and
the intersection indices are $(s,s)=-n$, $(s,f)=1$, $(f,f)=0$
(see~\cite[Proposition 2.9]{H}). Now suppose that $D\sim as+bf$ is an
effective divisor on~$F$ such that $(D,D)=0$ and $|D|$ has no
basepoints. We claim that such a $D$ is linearly equivalent to a
multiple of a fiber. Indeed, it follows from our assumptions that
\begin{equation}\label{eq:ruled}
  \begin{aligned}
    (D,D)&=-na^2+2ab=0,\\
    (D,s)&=-na+b\ge0,\\
     (D,f)&=a\ge 0.
  \end{aligned}
\end{equation}
If $a>0$, then it follows from \eqref{eq:ruled} that $b=na/2$ and
$-na/2\ge0$, whence $n=0$, which contradicts our assumption $n>0$; if
$a=0$, then $D\sim bf$, which proves our claim.

It follows from what we have just proved that both the divisors $L$
and $M$ are linearly equivalent to a multiple of the fiber, and so is
$L+M$. Since $\Oo_F(L+M)=(g')^*\Oo_Q(1)$, this contradicts the fact
that $g'(F)$ is two-dimensional.

Thus, $n=0$ and $F$ is isomorphic to the smooth two-dimensional
quadric. It follows from the fact that the two-dimensional quadric
does not contain exceptional curves that $g'\colon F\to Q$ has finite
fibers, so the morphism $g$ has finite fibers, so $g$ is finite. Since
$g$ is birational and $\tilde Y$ is smooth, $g$ is an isomorphism.\qed
\end{proof}

\begin{proof}[of assertion~\textup{(i)} of
    Proposition~\ref{res-nores}] 
Given that $\rank i_*=1$, we have to prove that a projective small
resolution of the point~$a\in W$ does not exist.

Arguing by contradiction, suppose that $\pi\colon W_1\to W$, where
$W_1$ is a projective variety, is a small resolution, and put
$C=\pi^{-1}(a)$ (set-theoretically). Lemma~\ref{H^1(O)=0} implies that
$H^1(C,\Oo_C)=0$. There exists, in the category of complex spaces, a
small resolution of the node~$a$ for which the fiber over~$a$ is
isomorphic to $\PP^1$ (see for example \cite[Lemma 4]{Atiyah}
or~\cite{Reid}). Now Lemma~\ref{IClemma} implies that $H^2(C,\Q)\cong
H^2(\PP^1,\Q)$ (singular cohomology), so $C$ is irreducible. Putting
these two facts together, one concludes that $C\cong \PP^1$.

Let $s\colon Z\to W_1$ be the blowup of the curve~$C$ in $W_1$. Since
$C\cong\PP^1$, one has $s^{-1}(C)\cong
F=\PP(\Oo_{\PP^1}\oplus\Oo_{\PP^1}(-n))$ for some~$n\ge0$. The
composition $\pi\circ s\colon Z\to W$ blows down the surface $F$, so,
by Lemma~\ref{blowuplemma} above, $F$ is isomorphic to the smooth
two-dimensional quadric and $\pi\circ s\colon Z\to W$ is isomorphic to
$\sigma\colon \tilde W\to W$, where $\tilde W$ is the blowup of~$a$;
we identify $Z$ with $\tilde W$ and $F$ with~$Q$ via this
isomorphism. In the quadric $F\subset Z$, let $\ell\subset F$ be a
fiber of the projection $\sigma|_F\colon F\to C$, and let $m\subset F$
be a ``line of the ruling'' that is mapped onto $C$ by~$\sigma$. We
supposed that $W_1$ is a projective variety; if $H$ is an ample
divisor on $W_1$, then $(s^*H,\ell)=0$ and $(s^*H, m)>0$, so the
images of the fundamental classes of $\ell$ and~$m$ in $H_2(Z,\Q)\cong
H_2(\tilde W,\Q)$ are not proportional. Hence, the rank of the mapping
$H_2(Q,\Q)\to H_2(\tilde W,\Q)$ induced by the embedding
$Q\hookrightarrow \tilde W$ is equal to~$2$, which contradicts our
hypothesis.\qed
\end{proof}

To prove assertion~(ii) of Proposition~\ref{res-nores} we will need
yet another lemma.

\begin{lemma}\label{NE_1+NE_2}
Suppose that $X$ is a smooth projective threefold and $F\subset X$ is
an irreducible surface.

Let $\NE_1\subset N_1(X)$ be the cone of effective $1$-cycles on $X$
such that all their components lie in $F$, and let
$\NE_2\subset N_1(X)$ be the cone of effective $1$-cycles on $X$ such
that none of their components lies in $F$. In this setting, if
$\NEb_i$, $i=1,2$, is the closure of $\NE_i$, then
$\NEb(X)=\NEb_1+\NEb_2$.
\end{lemma}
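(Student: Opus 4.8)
The plan is to reduce the statement to a set-theoretic decomposition of the effective cone, and then to control the passage to the closure. First I would note that every irreducible curve on $X$ either is contained in $F$ or is not, so an arbitrary effective $1$-cycle $Z$ splits as $Z=Z_1+Z_2$, where every component of $Z_1$ lies in $F$ and no component of $Z_2$ does. Passing to numerical classes yields $[Z]=[Z_1]+[Z_2]$ with $[Z_1]$ in the cone $\NE_1$ and $[Z_2]$ in $\NE_2$; conversely, a sum of two effective cycles is effective. Hence $\NE(X)=\NE_1+\NE_2$ as subsets of $N_1(X)$, and therefore $\NEb(X)=\overline{\NE_1+\NE_2}$.

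The inclusion $\NEb_1+\NEb_2\subseteq\overline{\NE_1+\NE_2}=\NEb(X)$ is immediate: if $a=\lim a_n$ and $b=\lim b_n$ with $a_n\in\NE_1$ and $b_n\in\NE_2$, then $a+b=\lim(a_n+b_n)$. For the reverse inclusion it would suffice to know that the cone $\NEb_1+\NEb_2$ is already closed, for then $\overline{\NE_1+\NE_2}\subseteq\overline{\NEb_1+\NEb_2}=\NEb_1+\NEb_2$. Thus the whole lemma comes down to the closedness of the Minkowski sum of the two closed convex cones $\NEb_1$ and $\NEb_2$ in the finite-dimensional space $N_1(X)$, and this is the main obstacle, since such a sum need not be closed in general.

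I would clear this obstacle in two steps. First, I would verify that $\NEb_1\cap(-\NEb_2)=\{0\}$: both $\NEb_1$ and $\NEb_2$ are contained in $\NEb(X)$, which is salient because, by Kleiman's criterion, an ample class pairs strictly positively with every nonzero element of $\NEb(X)$; hence any $v\in\NEb_1\cap(-\NEb_2)$ would satisfy $v,-v\in\NEb(X)$ and so $v=0$. Second, I would invoke the standard fact that two closed convex cones $C_1,C_2$ in a finite-dimensional space with $C_1\cap(-C_2)=\{0\}$ have closed sum: given $a_n+b_n\to x$ with $a_n\in C_1$ and $b_n\in C_2$, either the sequences stay bounded, in which case convergent subsequences produce a limit in $C_1+C_2$, or, normalizing by the larger of the two norms, one extracts a unit vector lying in $C_1\cap(-C_2)$, a contradiction. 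Applying this with $C_i=\NEb_i$ shows that $\NEb_1+\NEb_2$ is closed and completes the proof.
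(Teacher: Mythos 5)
Your proposal is correct and follows essentially the same route as the paper: decompose $\NE(X)$ as $\NE_1+\NE_2$, verify $\NEb_1\cap(-\NEb_2)=\{0\}$ via Kleiman's criterion with an ample class, and conclude that the sum $\NEb_1+\NEb_2$ of the two closed convex cones is closed. The only cosmetic difference is that you sketch the compactness argument for closedness of the sum, whereas the paper cites it.
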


\begin{proof}
Observe first that $\NEb_1+\NEb_2$ is closed in $N_1(X)$.  Indeed, if
$A,B\subset\R^n$ are closed convex cones such that $A\cap(-B)=\{0\}$,
then their sum $A+B\subset\R^n$ is closed (see for example
\cite[Theorem 2.1]{Klee} or \cite[Theorem 3.2]{Beutner}), so to prove
this assertion it suffices to show that
$\NEb_1\cap(-\NEb_2)=\{0\}$. Now if $u\in \NEb_1\cap(-\NEb_2)$ and if
$H$ is an ample divisor on~$X$, then $(u, H)\ge0$ since $u\in \NEb_1$,
and $(u, H)\le 0$ since $u\in -\NEb_2$, whence $(u, H)=0$. Since $u\in
\NEb(X)$, Kleiman's criterion for amplitude (see \cite[Theorem
  1.4.29]{Lazarsfeld}) implies that $u=0$, and we are done.

Since, by the very construction,
$\NE(X)=\NE_1+\NE_2\subset \NEb_1+\NEb_2$ and since $\NEb_1+\NEb_2$ is
closed, one has $\NEb(X)\subset \NEb_1+\NEb_2$. Taking into account
that $\NEb_1,\NEb_2\subset \NEb(X)$, we are done.
\end{proof}

\begin{proof}[of assertion~\textup{(ii)} of
    Proposition~\ref{res-nores}] 
It is clear that one can embed $\tilde W$ in a projective space so
that the ``lines'' of two rulings on the quadric
$Q=\sigma^{-1}(a)\subset\tilde W$ are embedded as actual lines; to that
end, it suffices to consider a projective embedding $W\subset\PP^N$, put
\begin{equation*}
\widetilde{\PP^N}=\{(x,L)\in \PP^N\times\PP^{N-1}\colon x\in L\},
\end{equation*}
where $\PP^{N-1}$ is the set of lines in $\PP^N$ passing
through~$a$, and observe that
\begin{equation*}
\tilde W=\overline{p^{-1}(W\setminus\{a\})}\subset \widetilde{\PP^N},  
\end{equation*}
where $p\colon \widetilde{\PP^N}\to\PP^N$ is induced by the projection
on the first factor. Till the end of the proof we will fix a
projective embedding of $\tilde W$ with these properties.

Let $\ell,m\subset Q$ be lines of two different rulings. We claim that
$\R_+[\ell]$ is an extremal ray in $\NEb(\tilde W)$.

Observe that, since $\Oo_Q(Q)\cong \Oo_Q(-1,-1)$, one has
$\omega_{\tilde W}\otimes\Oo_Q\cong \Oo_Q(-1,-1)$, so $(\ell,
K_{\tilde W})=-1<0$. Thus, to prove that $\R_+[\ell]$ is an extremal
ray it remains to show that if $[\ell]=u+v$, where $u,v\in\NEb(\tilde
W)$, then $u$ and $v$ are multiples of~$[\ell]$.

Suppose now that $[\ell]=u+v$, 
$u,v\in\NEb(\tilde W)$, and
put $X=\tilde W$ and $F=Q$ in Lemma~\ref{NE_1+NE_2}. One concludes
that $u=u_1+u_2$ and $v= v_1+v_2$, where, using the notation of
the above-mentioned lemma, $u_1,v_1\in \NEb_1$ and $u_2,v_2\in\NEb_2$. Putting
$w_1=u_1+v_1\in\NEb_1$, $w_2=u_2+v_2\in\NEb_2$, one has $[\ell]=w_1+w_2$.

Let $H$ be the divisor class of a hyperplane section of~$\tilde W$. 
Intersecting $\ell$ with $H$ and
with~$Q$, one has
\begin{equation}
  \begin{aligned}\label{w.Q}
  1&=(w_1, H)+(w_2, H),\\
  -1&=(w_1, Q)+(w_2, Q).  
  \end{aligned}
\end{equation}
Since $w_1$ is a linear combination of curves lying in $Q$ and
$\Oo_Q(Q)\cong \Oo_Q(-H)$, one has $(w_1, Q)=-(w_1, H)$. Adding
the equations~\eqref{w.Q} one obtains $(w_2, H)+(w_2,
Q)=0$. However, $(w_2, H)\ge0$ since $H$ is ample and $(w_2,
Q)\ge0$ since no component of~$w_2$ lies in~$Q$. Hence, $(w_2,
H)=(u_2, H)+(v_2, H)=0$. Since $H$ is ample, $(u_2,
H)\ge0$ and $(v_2, H)\ge0$, whence $u_2=v_2=0$ by Kleiman's
criterion~\cite[Theorem 1.4.29]{Lazarsfeld}.

Thus, $u=u_1$, $v=v_1$, so $u,v\in\NEb_1$.  Since $\Pic(Q)$ is
generated by the classes of $\ell$ and~$m$, one has
$\NEb_1=\{a[\ell]+b[m]\colon a,b\ge0\}$. Observe now that $[\ell]$ and
$[m]$ are linearly independent in $N_1(X)$. Indeed, image of the
fundamental class of a projective curve $C\subset\tilde W$ in the
second rational homology group of $\tilde W$ is uniquely determined by
intersection indices of $C$ with divisors. Now if $[\ell]$ and $[m]$
are linearly dependent, then images of fundamental classes of $\ell$
and $m$ are proportional in $H_2(\tilde W,\R)$, so $\rank i_*=1$,
which contradicts our hypothesis. Thus, if $u=a[\ell]+b[m]$,
$v=a'[\ell]+b'[m]$, and $[\ell]=u+v$, one has $b+b'=0$. Since
$b,b'\ge0$, this implies that $b=b'=0$, so $u$ and $v$ are
multiples of $v$, as required.

Thus, $\R_+[\ell]$ is an extremal ray. Since $[\ell]$ is not a
multiple of~$[m]$, this ray is of the type~(1.1.2.1) in the notation
of~\cite{Kollar}. Hence, there exists a birational mapping
$s\colon\tilde W\to W_1$, where $W_1$ is smooth and $s^{-1}$ defines
the blowing up of a curve $C\subset W_1$ such that $C\cong \PP^1$ and
$N_{W_1|C}\cong \Oo_{\PP^1}(-1)\oplus\Oo_{\PP^1}(-1)$. The rational
mapping $s_1=\sigma\circ s^{-1}\colon W_1\dasharrow W$ is continuous
in the classical topology; since $W_1$ is smooth, the mapping $s_1$ is
a morphism. Hence, $s_1\colon W_1\to W$ is the desired small
resolution.\qed
\end{proof}

\section{Some consequences of Condition (A)}\label{sec:top2geom}

Suppose that $X\subset\PP^N$ is a smooth projective variety of
dimension~$4$ for which $X^*\subset(\PP^N)^*$ is a hypersurface. Let
$Y_0$ be a hyperplane section with a single singular point which is a
node.

If $\tilde Y$ is the blowup of $Y_0$ at the singularity, let $Q\subset
\tilde Y$ be the inverse image of the singularity; $Q$ is isomorphic
to the smooth 2-dimensional quadric. Let $i_*\colon H_2(Q,\Q)\to
H_2(\tilde Y,\Q)$ be the homomorphism induced by the embedding
$Q\hookrightarrow \tilde Y$.

\begin{proposition}\label{b_4(Y_0)}
In the above setting, if Condition~\textup(A\textup) holds for $X$,
then $\rank i_*=1$, and if Condition~\textup(A\textup) does not hold
for $X$, then $\rank i_*=2$.
\end{proposition}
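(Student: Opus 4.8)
The plan is to translate the purely topological quantity $\rank i_*$ into a statement about the Picard--Lefschetz vanishing cycle of the degeneration of a smooth hyperplane section $Y_t$ to the nodal section $Y_0$, and then to invoke the characterization recalled above of Condition~(A) (under the standing hypothesis that $X^*$ is a hypersurface) as the non-triviality of the monodromy on $H^3(Y,\Q)$, that is, as the non-vanishing of the vanishing cycle $\delta\in H_3(Y_t,\Q)$. I work throughout in the classical topology. The geometric input is the local conifold picture: the link of the node is $L\cong S^2\times S^3$, and it is filled in three relevant ways. Writing $U=Y_0\setminus\{a\}$, the smooth fibre decomposes as $Y_t\cong U\cup_L M$, where $M$ is the Milnor fibre (homotopy equivalent to the vanishing $S^3$), while the blowup decomposes as $\tilde Y\cong U\cup_L N$, where $N$ is a tubular neighbourhood of $Q$ retracting onto $Q\cong\PP^1\times\PP^1$ and $L=\partial N$ is the normal circle bundle of $Q$. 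Thus $Y_t$ and $\tilde Y$ are obtained by gluing the \emph{same} open part $U$ to two different fillings of the same link $L$.

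First I would compute $\rank i_*$ in terms of $U$ using the Mayer--Vietoris sequence for $\tilde Y=U\cup N$. Since $H_1(L,\Q)=0$, this sequence gives a surjection $H_2(U,\Q)\oplus H_2(Q,\Q)\to H_2(\tilde Y,\Q)$, and the kernel of $i_*\colon H_2(Q,\Q)\to H_2(\tilde Y,\Q)$ equals the image under $H_2(L,\Q)\to H_2(Q,\Q)$ of $\ker\bigl(H_2(L,\Q)\to H_2(U,\Q)\bigr)$. A Gysin-sequence computation for the circle bundle $L\to Q$ with Euler class $c_1(\Oo_Q(-1,-1))$ shows that $H_2(L,\Q)\cong\Q$ and that its generator (the class of the link $S^2$) maps to a \emph{nonzero} class in $H_2(Q,\Q)$ (the difference of the two ruling classes). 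Since $H_2(L,\Q)\cong\Q$, it follows that $\rank i_*=2$ exactly when $H_2(L,\Q)\to H_2(U,\Q)$ is injective, and $\rank i_*=1$ exactly when this map is zero.

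Next I would identify the vanishing of $H_2(L,\Q)\to H_2(U,\Q)$ with the non-vanishing of $\delta$. Using the long exact sequence of the pair $(Y_t,U)$ together with excision $H_*(Y_t,U)\cong H_*(M,L)$ and Lefschetz duality $H_3(M,L)\cong H^3(M)\cong H^3(S^3)\cong\Q$, the map $H_2(L,\Q)\to H_2(U,\Q)$ becomes the connecting homomorphism $\partial\colon H_3(Y_t,U;\Q)\to H_2(U,\Q)$, whose kernel is the image of the restriction $r\colon H_3(Y_t,\Q)\to H_3(Y_t,U;\Q)\cong\Q$. The key point is that, under the identification of $H_3(Y_t,U;\Q)$ with the Thom class of the trivial rank-$3$ normal bundle of the vanishing sphere, the map $r$ sends a class $\gamma$ to the intersection number $(\gamma\cdot\delta)$. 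Because the intersection form on $H_3(Y_t,\Q)$ is non-degenerate (Poincaré duality on the smooth threefold $Y_t$), the map $r$ is surjective if and only if $\delta\ne0$, and $r=0$ if and only if $\delta=0$. Combining with the previous paragraph, $\rank i_*=1\iff\delta\ne0$ and $\rank i_*=2\iff\delta=0$.

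Finally I would conclude by Picard--Lefschetz theory. Under the standing assumption that $X^*$ is a hypersurface, Condition~(A) is equivalent to the non-triviality of the monodromy on $H^3(Y,\Q)$, which amounts to the Picard--Lefschetz transvection along $\delta$ being nontrivial, hence to $\delta\ne0$. Thus Condition~(A) holds iff $\delta\ne0$ iff $\rank i_*=1$, and Condition~(A) fails iff $\delta=0$ iff $\rank i_*=2$, as claimed. I expect the main obstacle to be the third step: making precise, with the correct orientations, the claim that $r$ computes intersection with $\delta$, and justifying the comparison that $Y_t$ and $\tilde Y$ are built from the \emph{same} $U$ glued along the \emph{same} link $L$ --- that is, pinning down the local conifold geometry in which $L\cong S^2\times S^3$ is filled by the vanishing $S^3$ on the smoothing side and carries the exceptional quadric $Q\cong S^2\times S^2$ on the blowup side.
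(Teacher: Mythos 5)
Your proposal is correct, and it reaches the paper's criterion (vanishing cycle $\delta$ zero or not) by a genuinely different route. The paper works globally with quotient spaces: it uses the homeomorphisms $Y_0\cong Y/S$ and $Y_0\cong\tilde Y/Q$, reads off $b_2(Y_0)=b_2(Y)$ and $b_4(Y_0)=b_2(Y_0)$ or $b_2(Y_0)+1$ from the exact sequence of the pair $(Y,S)$ together with Poincar\'e duality on $Y$ (the dichotomy being exactly whether $j_*\colon H_3(S,\Q)\to H_3(Y,\Q)$ is injective or zero), and then converts the Betti-number comparison into $\rank i_*$ via Proposition~\ref{b_2=b_4}, which is already available from Section~\ref{sec:contration}. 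You instead localize at the node: you present $\tilde Y$ and $Y_t$ as two fillings of the conifold link $L\cong S^2\times S^3$ glued to the common open part $U=Y_0\setminus\{a\}$, compute $\rank i_*$ by Mayer--Vietoris (using that $H_2(L,\Q)\cong\Q$ injects into $H_2(Q,\Q)$ by the Gysin sequence, so everything hinges on whether $H_2(L,\Q)\to H_2(U,\Q)$ is injective or zero), and identify that dichotomy with the surjectivity of $\gamma\mapsto(\gamma\cdot\delta)$ via excision, Lefschetz duality on the Milnor fibre, and nondegeneracy of the intersection form on $H_3(Y_t,\Q)$; the final appeal to Picard--Lefschetz theory is the same as the paper's. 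Each step of yours checks out (in particular $\partial'\colon H_3(M,L)\to H_2(L)$ is an isomorphism since $H_2(M)=0$ and both groups are one-dimensional, so the reduction to the restriction map $r$ is legitimate). What the paper's version buys is economy --- it reuses Proposition~\ref{b_2=b_4} and avoids any discussion of links, Gysin sequences, Thom classes, or orientations; what yours buys is a self-contained local picture of the conifold transition that makes transparent \emph{why} the answer depends only on whether $\delta$ dies in $H_3(Y_t,\Q)$. The burden you correctly flag --- rigorously identifying $U$ inside both $\tilde Y$ and $Y_t$ and pinning down $r(\gamma)=(\gamma\cdot\delta)$ --- is real but standard, and is precisely the bookkeeping the paper's quotient-space argument sidesteps.
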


\begin{proof}
A general smooth hyperplane section $Y\subset X$ can be included
in a Lefschetz pencil one of the fibers of which is~$Y$.  By $S\subset
Y$ we denote the vanishing cycle (homeomorphic to the sphere~$S^3$) that is
contracted to an $A_1$ singularity, so that $Y_0$ is homeomorphic
to~$Y/S$.  It is well known (see \cite{Lamotke} or \cite[Expos\'es
  XVII et XVIII]{SGA7.2}) that the monodromy group acting in $H^3(Y,\Q)$
is generated by (pseudo)reflections in the classes of vanishing cycles
corresponding to singular fibers of the Lefschetz pencil, and that all
these classes are conjugate by the action of the monodromy
group. Thus, Condition~(A) for the variety~$X$ is violated if and only
if the vanishing cycle~$S$ is homologous to zero in $H_3(Y,\Q)$.

Consider the following two fragments of the exact sequence of the pair
$(Y,S)$: 
\begin{gather}
H_2(S,\Q)\to H_2(Y,\Q)\to H_2(Y_0,\Q)\to H_1(S,\Q),\label{eq-of-hom2}\\
H_4(S,\Q)\to H_4(Y,\Q)\to H_4(Y_0,\Q)\to H_3(S,\Q)\xrightarrow{j_*}
H_3(Y,\Q)\label{eq-of-hom}
\end{gather}
where $j\colon S\to Y$ is the natural embedding.

It follows from \eqref{eq-of-hom2} that $b_2(Y)=b_2(Y_0)$, and it
follows from~\eqref{eq-of-hom} and Poincar\'e duality for~$Y$ that
$b_4(Y_0)=b_2(Y_0)$ if Condition (A) holds and that
$b_4(Y_0)=b_2(Y_0)+1$ if Condition (A) does not hold. Now the
desired result follows from Proposition~\ref{b_2=b_4}.\qed
\end{proof}

\section{Conclusion}\label{conclusion}

Now we can prove Propositions~\ref{omega(1)} and~\ref{not-A}. 

\begin{proof}[of Proposition~\ref{not-A}]
Suppose that $X\subset\PP^n$ is a smooth $4$-dimensional projective
variety over~\C such that
$X^*\subset(\PP^n)^*$ is a hypersurface. It follows from \cite[Theorem
  17]{Kleiman} that a general singular hyperplane section of~$X$ has
precisely one singular point, and this point is a node.

Suppose now that
 Condition~\textup{(A)} holds for~$X$; let
$Y_0\subset X$ be a hyperplane section with a unique singularity $a\in
Y_0$ which is a node. It follows from Proposition~\ref{b_4(Y_0)}
that $\rank i_*=1$, where $i\colon Q\to \tilde Y_0$ is the natural
embedding, $\sigma \colon \tilde Y_0\to Y_0$ is the blowup of~$a$,
and $Q=\sigma^{-1}(a)$. Then Proposition~\ref{res-nores} implies the
non-existence of a small resolution for $Y_0$.  

On the other hand, if Condition~(A) does not hold for~$X$, then, keeping the
previous notation, it follows from Proposition~\ref{b_4(Y_0)} that
$\rank i_*=2$, whence, by virtue of Proposition~\ref{res-nores}, a
projective small resolution for~$Y_0$ exists.\qed
\end{proof}

\begin{proof}[of Proposition~\ref{omega(1)}]
Suppose that $X\subset\PP^n$ is a smooth projective fourfold such that
$H^0(X,\omega_X(1))\ne 0$. If $Y\subset X$ is a smooth hyperplane
section, then it follows from the implication
$\mathrm{iii})\Rightarrow \mathrm{iv})$ in \cite[Proposition
  6.1]{Lvovski} that $b_3(Y)>b_3(X)$, in particular, the restriction
homomorphism $H^3(X,\Q)\to H^3(Y,\Q)$ cannot be surjective. A theorem
of A.\,Landman (see \cite[Theorem 22]{Kleiman}) asserts that if
$X\subset \PP^n$ is a smooth $d$-dimensional projective manifold such
that $X^*\subset(\PP^n)^*$ is not a hypersurface, then
$b_{d-1}(Y)=b_{d-1}(X)$. Thus, if $H^0(X,\omega_X(1))\ne 0$ then $X^*$
is a hypersurface and Condition~(A) is satisfied for~$X$. Now
Proposition~\ref{not-A} applies.\qed
\end{proof}

\bibliographystyle{amsplain}      

\bibliography{notabib}

\end{document}